\newcommand{\wtcR}{\wtilde{\cR}}
\newcommand{\whf}{\what{f}}
\newcommand{\whg}{\what{g}}
\newcommand{\whh}{\what{h}}
\newcommand{\whk}{\what{k}}
\newcommand{\whfN}{\what{\fN}}
\newcommand{\vp}{\varphi}
\newcommand{\wtGm}{\widetilde{\Gamma}}
\begin{document}
\title[Weyl families of essentially unitary pairs]{Weyl
families of essentially unitary pairs}
\author{Rytis Jur\v{s}\.{e}nas}
\address{Institute of Theoretical Physics and Astronomy,
Vilnius University, Sauletekio 3, LT-10257, Vilnius, Lithuania}
\email{rytis.jursenas@tfai.vu.lt}
\keywords{Krein space, isometric relation, unitary relation,
essentially unitary relation, boundary pair, Weyl family,
Nevanlinna family.}
\subjclass[2010]{Primary
47A06, %Linear relations (multivalued linear operators)
47A56, %Functions whose values are linear operators (operator and matrix valued functions, etc., including analytic and meromorphic ones)
47B25; %Symmetric and selfadjoint operators (unbounded)
Secondary
47B50, %Operators on spaces with an indefinite metric
35P05. %General topics in linear spectral theory
}
\date{\today}
\begin{abstract}
It is known that the Weyl families corresponding to
unitary boundary pairs $(\cH,\Gamma)$ belong to the class
$\wtcR(\cH)$ of Nevanlinna families. Here we extend
the theorem to the case of essentially unitary boundary pairs
by showing that the closures of members of the Weyl
families belong to the class $\wtcR(\cH)$. Thus bounded
Weyl functions of essentially unitary boundary pairs are of
class $\cR[\cH]$.
\end{abstract}
\maketitle
%%%%%%%%%%%%%%%%%%%%%%%%%%%%%%%%%%%%%%%%%%%%%%%%%%%%%%%%%%%%%%
%%%%%%%%%%%%%%%%%%%%%%%%%%%%%%%%%%%%%%%%%%%%%%%%%%%%%%%%%%%%%%
%%%%%%%%%%%%%%%%%%%%%%%%%%%%%%%%%%%%%%%%%%%%%%%%%%%%%%%%%%%%%%
%%%%%%%%%%%%%%%%%%%%%%%%%%%%%%%%%%%%%%%%%%%%%%%%%%%%%%%%%%%%%%
\section{Introduction}
%%%%%%%%%%%%%%%%%%%%%%%%%%%%%%%%%%%%%%%%%%%%%%%%%%%%%%%%%%%%%%
%%%%%%%%%%%%%%%%%%%%%%%%%%%%%%%%%%%%%%%%%%%%%%%%%%%%%%%%%%%%%%
%%%%%%%%%%%%%%%%%%%%%%%%%%%%%%%%%%%%%%%%%%%%%%%%%%%%%%%%%%%%%%
%%%%%%%%%%%%%%%%%%%%%%%%%%%%%%%%%%%%%%%%%%%%%%%%%%%%%%%%%%%%%%
Throughout $\fH$ and $\cH$ denote Hilbert spaces.
Let $\Gamma\subseteq\fH^2\times\cH^2$ be a linear relation
from a $J_\fH$-space to a $J_\cH$-space \cite[Section~1]{Azizov89},
where the canonical symmetry $J_\fH$ ($J_\cH$)
acts on $\fH^2$ ($\cH^2$)
as the operator of multiplication by the matrix
$\bigl(\begin{smallmatrix}0 & -\img I_\fH \\ \img I_\fH & 0
\end{smallmatrix}\bigr)$.
Let $\Gamma^{[*]}$ denote the Krein space adjoint of $\Gamma$
\cite[Equation~(2.4)]{Derkach17}, \cite[Section~7.2]{Derkach12}.
Then $\Gamma$ is said to be $(J_\fH,J_\cH)$-isometric
if $\Gamma^{-1}\subseteq\Gamma^{[*]}$ and
$(J_\fH,J_\cH)$-unitary if $\Gamma^{-1}=\Gamma^{[*]}$
\cite[Definition~2.1]{Derkach17},
and essentially $(J_\fH,J_\cH)$-unitary if
$\ol{\Gamma}^{\;-1}=\Gamma^{[*]}$ \cite[Section~2.3]{Derkach06},
where the overbar denotes the closure
(with respect to the Hilbert space topology).
In what follows we refer to $\Gamma$
simply as isometric/(essentially) unitary.

Let $\Gamma$ be a linear relation
from a $J_\fH$-space to a $J_\cH$-space, and put
($[\bot]$ denotes the $J_\fH$-orthogonal complement;
the superscript $*$ denotes the Hilbert space adjoint)
\[
A_*:=\dom\Gamma\,,\quad
A:=A^*_*\equiv(A_*)^*=(\dom\Gamma)^{[\bot]}
=\mul\Gamma^{[*]}\,.
\]
Then $A$ is a closed, not necessarily symmetric,
linear relation in $\fH$, and
$A_*$ is dense in $A^*=\ol{\dom}\Gamma$
(with respect to the Hilbert space topology on $\fH^2$).
If $\Gamma$ is unitary,
then $A=S:=\ker\Gamma$ is automatically symmetric;
if $\Gamma$ is essentially unitary, then
$A=\ker\ol{\Gamma}\subseteq A^*=S^*$ is also
symmetric. Let
\[
A_\star:=\dom\ol{\Gamma}\,.
\]
Then $\ol{A_\star}=\ol{A_*}=A^*$ but
$A_\star\supseteq A_*$.
For a unitary $\Gamma$, $A_\star=A_*$.

Let $A$ be a closed symmetric linear relation in $\fH$.
The pair $(\cH,\Gamma)$ is an isometric (unitary) boundary
pair for $A^*$ if $A_*\subseteq A^*$ densely with
respect to the topology on $\fH^2$, and if
$\Gamma$ is isometric (unitary)
\cite[Definition~3.1]{Derkach17}.
Note that $\ol{A_*}=A^*$ implies that $A=\mul\Gamma^{[*]}$.
If $(\cH,\Gamma)$ is an isometric boundary pair for
$A^*$, then $(\cH,\ol{\Gamma})$ is also an isometric
boundary pair for $A^*$.

Let $(\cH,\Gamma)$ be a unitary boundary pair for $A^*=S^*$.
In the terminology of \cite[Definition~3.1]{Derkach06},
a unitary $\Gamma$ is called a boundary relation
for $S^*$; see also \cite[Proposition~3.2]{Derkach06}.
If $\Gamma$ is essentially unitary, we say that
the pair $(\cH,\Gamma)$ is an essentially unitary boundary pair
for $A^*=S^*$.

The first part of \cite[Theorem~3.9]{Derkach06} states that
the Weyl family $M_\Gamma(z)$, $z\in\bbC_*:=\bbC\setm\bbR$,
corresponding to a boundary relation $\Gamma$
for $S^*$ is a
Nevanlinna family; that is, it belongs to the class $\wtcR(\cH)$
(Definition~\ref{defn:Nevaln}).
Here we prove an analogue of this statement for an
essentially unitary $\Gamma$.
\begin{thm}\label{thm:main}
Let $M_\Gamma$ be the Weyl family
corresponding to an essentially unitary boundary pair $(\cH,\Gamma)$
for $A^*$.
Then the closure $\bbC_*\ni z\mapsto
\ol{M_\Gamma(z)}=M_{\ol{\Gamma}}(z)$
belongs to a Nevanlinna family.
\end{thm}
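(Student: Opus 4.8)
The plan is to deduce the theorem from the known unitary case. Since the Krein-space adjoint of a linear relation depends only on its closure, $\Gamma^{[*]}=\ol{\Gamma}^{\;[*]}$, so the defining identity $\ol{\Gamma}^{\;-1}=\Gamma^{[*]}$ of essential unitarity says precisely that the \emph{closed} relation $\ol{\Gamma}$ is $(J_\fH,J_\cH)$-unitary. Together with the facts recorded in the Introduction — $\wtA$ closed symmetric, $\wtA_*=\dom\ol{\Gamma}$ dense in $\wtA^*$ — this makes $(\cH,\ol{\Gamma})$ a unitary boundary pair, i.e.\ a boundary relation in the sense of \cite[Definition~3.1]{Derkach06}, so by the first part of \cite[Theorem~3.9]{Derkach06} its Weyl family $M_{\ol{\Gamma}}(z)$, $z\in\bbC_*$, is a Nevanlinna family. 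In particular each $M_{\ol{\Gamma}}(z)$ is a closed linear relation in $\cH$ that is maximal dissipative for $z\in\bbC_+$ and maximal accumulative for $z\in\bbC_-$, with $M_{\ol{\Gamma}}(\ol z)=M_{\ol{\Gamma}}(z)^*$. Hence it is enough to prove the identity $\ol{M_\Gamma(z)}=M_{\ol{\Gamma}}(z)$ for each $z\in\bbC_*$; the Nevanlinna property of $z\mapsto\ol{M_\Gamma(z)}$ then follows verbatim.

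One inclusion is routine. From $\Gamma\subseteq\ol{\Gamma}$ and $\whfN_z\cap\dom\Gamma\subseteq\whfN_z\cap\dom\ol{\Gamma}$ I get $M_\Gamma(z)=\Gamma(\whfN_z\cap\dom\Gamma)\subseteq\ol{\Gamma}(\whfN_z\cap\dom\ol{\Gamma})=M_{\ol{\Gamma}}(z)$, and since $M_{\ol{\Gamma}}(z)$ is closed this yields $\ol{M_\Gamma(z)}\subseteq M_{\ol{\Gamma}}(z)$. I would also record that $M_\Gamma(z)$ is already dissipative in the right sense: writing $\{h,h'\}=\Gamma\{f,zf\}$ with $\{f,zf\}\in\dom\Gamma$ and using the Green identity encoded in the $(J_\fH,J_\cH)$-isometry of $\Gamma$, one obtains $\mathrm{Im}\,(h',h)_\cH=(\mathrm{Im}\,z)\,\|f\|_\fH^2$; thus $\ol{M_\Gamma(z)}$ is a closed dissipative relation for $z\in\bbC_+$ and a closed accumulative one for $z\in\bbC_-$.

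The substance is the reverse inclusion $M_{\ol{\Gamma}}(z)\subseteq\ol{M_\Gamma(z)}$. I would prove it by showing that $\ol{M_\Gamma(z)}$ is itself \emph{maximal} dissipative for $z\in\bbC_+$ (maximal accumulative for $z\in\bbC_-$): a maximal dissipative relation contained in a maximal dissipative relation coincides with it, so together with the previous paragraph this forces $\ol{M_\Gamma(z)}=M_{\ol{\Gamma}}(z)$. For a closed dissipative relation $T$ and $\mu\in\bbC_-$ the inverse $(T-\mu)^{-1}$ is a bounded, closed operator, hence has closed domain $\operatorname{ran}(T-\mu)$; therefore maximality of $\ol{M_\Gamma(z)}$ reduces to the density in $\cH$ of $\operatorname{ran}(M_\Gamma(z)-\mu)=\{\,h'-\mu h:\{h,h'\}\in M_\Gamma(z)\,\}$ for a single $\mu\in\bbC_-$. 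This is where essential unitarity must be used, and where I expect the real work to lie: $\dom\Gamma$ is a core for $\ol{\Gamma}$, so in the $\ol{\Gamma}$-graph norm every element of $\whfN_z\cap\dom\ol{\Gamma}$ is a limit of elements of $\dom\Gamma$, but such approximants need not lie on the ``diagonal'' subspace $\whfN_z$, and a graph core need not meet $\whfN_z$ densely. My plan for this step is to exploit the von Neumann-type direct decomposition of $\wtA^{*}=A^{*}=\ol{\dom\Gamma}$ into $\wtA$, $\whfN_z$ and $\whfN_{\ol z}$ — available because $\wtA$ is closed symmetric — to correct a $\Gamma$-approximating sequence of a given $\{f,zf\}\in\whfN_z\cap\dom\ol{\Gamma}$ back onto $\whfN_z$ without leaving $\dom\Gamma$, with the non-real parameter $z$ keeping the correction bounded and its $\Gamma$-images convergent; equivalently one may pass to the ``main transform'' $\mathbf A$ of $\Gamma$ in $\fH\oplus\cH$, which is \emph{essentially self-adjoint} precisely because $\Gamma$ is essentially unitary, so that $\operatorname{ran}(\mathbf A-z)$ is dense in $\fH\oplus\cH$, and read off the required range density of $M_\Gamma(z)$ from the density of the corresponding slice of $\operatorname{ran}(\mathbf A-z)$. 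Keeping membership in $\whfN_z$, membership in $\dom\Gamma$, and convergence of the $\Gamma$-images simultaneously under control is the crux of the proof; everything else is bookkeeping against the already known unitary case.
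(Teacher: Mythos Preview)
Your reduction is sound: $(\cH,\ol{\Gamma})$ is a unitary boundary pair, $M_{\ol\Gamma}$ is Nevanlinna, and $\ol{M_\Gamma(z)}\subseteq M_{\ol\Gamma}(z)$ is immediate. The whole question is indeed the reverse inclusion, and here your two proposed mechanisms both have a real gap.

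For the main-transform route, unitarity of $\ol\Gamma$ makes $\ol{\mathbf A}$ self-adjoint, so $\ran(\ol{\mathbf A}-z)=\fH\oplus\cH$ and the \emph{slice} $\{k:(0,k)\in\ran(\ol{\mathbf A}-z)\}$ is all of $\cH$; that is exactly why $M_{\ol\Gamma}(z)+z$ is surjective. For the unclosed $\mathbf A$ you only know $\ran(\mathbf A-z)$ is \emph{dense}, and density of a linear subspace in $\fH\oplus\cH$ says nothing about density of its intersection with $\{0\}\times\cH$ in $\cH$; the slice can be $\{0\}$. So ``reading off'' range density of $M_\Gamma(z)+z$ from density of $\ran(\mathbf A-z)$ is not legitimate without further structure, and the extra structure you would need (e.g.\ $\fH\times\{0\}\subseteq\ol{\ran(\mathbf A-z)}$) is tantamount to a defect-index hypothesis on $S=\ker\Gamma$ that is not available. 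The approximation-plus-correction route hits the same wall you already flagged: the von~Neumann decomposition lives in $\dom\ol\Gamma$, not in $\dom\Gamma$, so projecting an approximating sequence back onto $\whfN_z$ throws it out of $\dom\Gamma$, and there is no evident mechanism keeping both constraints and the convergence of the $\Gamma$-images.

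The paper bypasses this entirely by computing the \emph{adjoint} rather than chasing maximality. It proves, for any isometric $\Gamma$, the identity
\[
M_\Gamma(z)^{*}=(\Gamma^{[*]})^{-1}\,\whfN_{\ol z}(A^{*})
\]
(working through the $J$-orthogonal complement of $\whfN_z(A_*)$ and showing that the difference between $\Gamma_z^{[*]}$ and $\Gamma^{[*]}$ restricted to $\whfN_{\ol z}(A^{*})$ disappears on the level of ranges). For essentially unitary $\Gamma$ this reads $M_\Gamma(z)^{*}=\ol\Gamma\,\whfN_{\ol z}(A^{*})=M_{\ol\Gamma}(\ol z)$, and one more adjoint gives $\ol{M_\Gamma(z)}=M_\Gamma(z)^{**}=M_{\ol\Gamma}(\ol z)^{*}=M_{\ol\Gamma}(z)$. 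No approximation inside $\whfN_z$, no slice argument; the essential-unitarity hypothesis is used once, algebraically, as $\Gamma^{[*]}=\ol\Gamma^{\,-1}$.
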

Here $M_{\ol{\Gamma}}$ is the Weyl family corresponding
to a unitary boundary pair $(\cH,\ol{\Gamma})$.
For $\Gamma$ unitary (hence closed), the theorem clearly
reduces to the first part of \cite[Theorem~3.9]{Derkach06}.
By assuming additionally that $M_\Gamma(z)\in \mrm{B}(\cH)$ is
a bounded (hence closed) operator, one deduces another corollary.
\begin{cor}\label{cor:main}
Let $(\cH,\Gamma)$ be an essentially unitary boundary pair
for $A^*$, and assume in addition that
$M_\Gamma(z)\in\mrm{B}(\cH)$, $z\in\bbC_*$.
Then the Weyl function $M_\Gamma=M_{\ol{\Gamma}}$
belongs to the subclass $\cR[\cH]$ of Nevanlinna functions.\qed
\end{cor}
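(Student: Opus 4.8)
The plan is to deduce the theorem from the unitary case, which is \cite[Theorem~3.9]{Derkach06}. The starting point is that essential unitarity of $\Gamma$ already forces $\ol{\Gamma}$ to be unitary: the Krein-space adjoint of a linear relation coincides with that of its closure, so $(\ol{\Gamma})^{[*]}=\Gamma^{[*]}$, and then the hypothesis $\ol{\Gamma}^{\;-1}=\Gamma^{[*]}$ becomes $\ol{\Gamma}^{\;-1}=(\ol{\Gamma})^{[*]}$, i.e.\ $\ol{\Gamma}$ is $(J_\fH,J_\cH)$-unitary. Consequently $(\cH,\ol{\Gamma})$ is a unitary boundary pair, and by the first part of \cite[Theorem~3.9]{Derkach06} its Weyl family belongs to $\wtcR(\cH)$; in particular $M_{\ol{\Gamma}}(z)$ is a closed linear relation in $\cH$ for each $z\in\bbC_*$. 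Everything then comes down to the identity $\ol{M_\Gamma(z)}=M_{\ol{\Gamma}}(z)$.

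One inclusion is routine. As $A=\wtA$ we have $A^*=\wtA^*$, so the defect subspace $\fN_z=\ker(A^*-z)$, and with it $\whfN_z:=\{\{f,zf\}:f\in\fN_z\}$, is unchanged on passing from $A$ to $\wtA$; only the ambient domain grows, $\whfN_z(A_*)=\whfN_z\cap A_*\subseteq\whfN_z\cap\wtA_*=\whfN_z(\wtA_*)$. Combined with $\Gamma\subseteq\ol{\Gamma}$ this gives $M_\Gamma(z)=\Gamma(\whfN_z(A_*))\subseteq\ol{\Gamma}(\whfN_z(\wtA_*))=M_{\ol{\Gamma}}(z)$, and since the right-hand side is closed, $\ol{M_\Gamma(z)}\subseteq M_{\ol{\Gamma}}(z)$.

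For the reverse inclusion, fix $\whh=\{h,h'\}\in M_{\ol{\Gamma}}(z)$, so that $\{\whf_z,\whh\}\in\ol{\Gamma}$ for some $\whf_z=\{f_z,zf_z\}$ with $f_z\in\fN_z$; since $\ol{\Gamma}$ is the graph closure of $\Gamma$, pick $\{\whg_n,\whk_n\}\in\Gamma$ with $\whg_n\to\whf_z$ in $\fH^2$ and $\whk_n\to\whh$ in $\cH^2$. The strategy is to correct each $\whg_n=\{g_n,g_n'\}\in A^*$ into an element of $\whfN_z(A_*)$ without disturbing the boundary values: writing $\psi_n:=g_n'-zg_n\to0$ and invoking the topological von Neumann decomposition $A^*=A+\whfN_z+\whfN_{\bar z}$ (a direct sum with bounded projections), one splits $\whg_n=\whg_n^{A}+\whg_n^{z}+\whg_n^{\bar z}$, whence $\whg_n^{A}\to0$, $\whg_n^{\bar z}\to0$, and $\whg_n^{z}\to\whf_z$. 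One must then see that a suitable modification of $\whg_n^{z}$ still lies in $\dom\Gamma=A_*$ and that its boundary value can be chosen to converge to $\whh$; granting this, these boundary values lie in $M_\Gamma(z)$ and tend to $\whh$, so $\whh\in\ol{M_\Gamma(z)}$.

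The main obstacle is precisely this correction step: a priori the components of an element of the (possibly proper) dense subspace $A_*$ need not lie in $A_*$, and $\Gamma$ is not continuous, so one cannot simply pass to the limit. Essential unitarity must be used here in an essential way — it is what ties $\Gamma$ to $\ol{\Gamma}$ rigidly, via $\ol{\Gamma}^{\;-1}=\Gamma^{[*]}$ and the resulting Krein-orthogonality relations ($\ker\ol{\Gamma}=\mul\Gamma^{[*]}=A$, $\mul\ol{\Gamma}=(\ran\Gamma)^{[\perp]}$) — and I would use it to show that $\whfN_z(A_*)$, equipped with the graph of $\Gamma$, is a \emph{core} for the closed relation $M_{\ol{\Gamma}}(z)$. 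An alternative would be to appeal to the transform description of the Weyl family, i.e.\ its construction from $\Gamma$ by $z$-dependent Cayley-type maps which are homeomorphisms in the relevant topologies and therefore commute with closures, so that $M_{\ol{\Gamma}}(z)$ is literally the closure of $M_\Gamma(z)$. Once the identity is in hand, Corollary~\ref{cor:main} is immediate: a bounded, hence closed, Weyl function satisfies $M_\Gamma(z)=\ol{M_\Gamma(z)}=M_{\ol{\Gamma}}(z)$, an everywhere-defined bounded member of a Nevanlinna family, that is, of class $\cR[\cH]$.
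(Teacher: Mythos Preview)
Your reduction to the identity $\ol{M_\Gamma(z)}=M_{\ol{\Gamma}}(z)$ is exactly right, and the easy inclusion and the final deduction of the corollary from this identity are both fine. The gap is the reverse inclusion. You correctly diagnose that the approximation argument breaks down: given $(\whg_n,\whk_n)\in\Gamma$ with $\whg_n\to\whf_z$, the von Neumann components of $\whg_n$ need not lie in $A_*=\dom\Gamma$, and even if you produce a sequence $\whg_n^{\,z}\in\whfN_z(A_*)$ with $\whg_n^{\,z}\to\whf_z$, you have no control over $\Gamma(\whg_n^{\,z})$ because $\Gamma$ is not continuous. Your two proposed remedies are not proofs: ``$\whfN_z(A_*)$ is a core for $M_{\ol{\Gamma}}(z)$'' is a restatement of the conclusion, and the Cayley-transform remark is a heuristic that still needs the nontrivial verification that the relevant $z$-dependent transform intertwines closures of $\Gamma$ with closures of $M_\Gamma(z)$.

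The paper avoids the approximation entirely and works on the adjoint side. It computes, for an arbitrary isometric $\Gamma$, the explicit formula
\[
M_\Gamma(z)^*=(\Gamma^{[*]})^{-1}\,\whfN_{\ol z}(A^*),
\]
by first showing $M_\Gamma(z)^*=(\Gamma_z^{[*]})^{-1}\whfN_z(A_*)^{[\bot]}$, then using a decomposition of $\whfN_z(A_*)^{[\bot]}$ and the inclusion $\fO_z\subseteq\mul\Gamma_z^{[*]}$ to replace this by $(\Gamma_z^{[*]})^{-1}\whfN_{\ol z}(A^*)$, and finally arguing that passing from $\Gamma_z^{[*]}$ to $\Gamma^{[*]}$ does not change the resulting set. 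For essentially unitary $\Gamma$ this reads $M_\Gamma(z)^*=\ol{\Gamma}\,\whfN_{\ol z}(A^*)=M_{\ol{\Gamma}}(\ol z)$, and since $\ol{\Gamma}$ is unitary the same computation gives $M_{\ol{\Gamma}}(\ol z)^*=M_{\ol{\Gamma}}(z)$. Taking a second adjoint yields $\ol{M_\Gamma(z)}=M_\Gamma(z)^{**}=M_{\ol{\Gamma}}(z)$, with no density or continuity issues. If you want to salvage your approach, the adjoint computation is the missing ingredient; your approximation route, as it stands, does not close.
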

According to \cite[Proposition~5.9]{Derkach06} a Nevanlinna function
of class $\cR[\cH]$ can be realized as the Weyl function of
a $B$-generalized boundary pair $(\cH,\Gamma)$
\cite[Definition~3.5]{Derkach17}. Let us recall that
ordinary, $B$-generalized, $S$-generalized, $ES$-generalized
boundary pairs are all unitary boundary pairs; see
\cite{Derkach17} for more details. Yet Corollary~\ref{cor:main}
shows that one can find a non-unitary boundary pair
with the same Weyl function.

Assuming the hypotheses in Corollary~\ref{cor:main}
and in addition $\ran\ol{\Gamma}=\cH^2$, one concludes that
the Weyl function $M_\Gamma=M_{\ol{\Gamma}}$
belongs to the subclass $\cR^u[\cH]$ of uniformly strict
Nevanlinna functions. The single-valued linear relation
(\ie operator)
$\Gamma$ with such properties arises, for example,
in the study of triplet extensions of self-adjoint operators
\cite{Jursenas18}; see also example in Section~\ref{sec:exam}.

The proof of Theorem~\ref{thm:main} is organized as follows:
In Section~\ref{sec:iso} we state and prove the main technical
lemma (Lemma~\ref{lem:main}).
In Section~\ref{sec:Weyl} we compute the adjoint
$M_\Gamma(z)^*$ for an isometric boundary pair $(\cH,\Gamma)$;
it follows that $M_\Gamma(z)^*=M_{\ol{\Gamma}}(\ol{z})$
for $\Gamma$ essentially unitary.
Since $M_{\ol{\Gamma}}$ is a Nevanlinna
family for $\ol{\Gamma}$ unitary,
this leads to Theorem~\ref{thm:main}; see Section~\ref{sec:Nevaln}.

Throughout we use the standard symbols $\dom$, $\ran$,
$\mul$, and $\ker$ to denote the domain, the range,
the multivalued part, and the kernel of a linear relation.
For more details related to the theory of linear relations and
Nevanlinna families the reader may consult the papers in
\cite{Boitsev18,Derkach17b,Behrndt15,Behrndt13,Snoo11,Derkach09,
Hassi09, Behrndt08,Hassi07,Hassi96,Derkach91}
and also an extensive list of references therein.
%%%%%%%%%%%%%%%%%%%%%%%%%%%%%%%%%%%%%%%%%%%%%%%%%%%%%%%%%%%%%%
%%%%%%%%%%%%%%%%%%%%%%%%%%%%%%%%%%%%%%%%%%%%%%%%%%%%%%%%%%%%%%
%%%%%%%%%%%%%%%%%%%%%%%%%%%%%%%%%%%%%%%%%%%%%%%%%%%%%%%%%%%%%%
%%%%%%%%%%%%%%%%%%%%%%%%%%%%%%%%%%%%%%%%%%%%%%%%%%%%%%%%%%%%%%
\section{Main lemma}\label{sec:iso}
%%%%%%%%%%%%%%%%%%%%%%%%%%%%%%%%%%%%%%%%%%%%%%%%%%%%%%%%%%%%%%
%%%%%%%%%%%%%%%%%%%%%%%%%%%%%%%%%%%%%%%%%%%%%%%%%%%%%%%%%%%%%%
%%%%%%%%%%%%%%%%%%%%%%%%%%%%%%%%%%%%%%%%%%%%%%%%%%%%%%%%%%%%%%
%%%%%%%%%%%%%%%%%%%%%%%%%%%%%%%%%%%%%%%%%%%%%%%%%%%%%%%%%%%%%%
Consider a linear relation $\Gamma$ from a $J_\fH$-space
to a $J_\cH$-space.
The $J_\fH$-metric
$[\cdot,\cdot]_{\fH^2}$ is written in terms of the
$\fH^2$-scalar product $\braket{\cdot,\cdot}_{\fH^2}$
according to
\[
[\whf,\whg]_{\fH^2}:=\braket{\whf,J_\fH\whg}_{\fH^2}
=-\img(\braket{f,g^\prime}_\fH-\braket{f^\prime,g}_\fH)
\]
for $\whf=(f,f^\prime)\in\fH^2$ and
$\whg=(g,g^\prime)\in\fH^2$, provided that the
$\fH$-scalar product $\braket{\cdot,\cdot}_\fH$ is
conjugate-linear in the first argument. The same
applies to the $J_\cH$-metric $[\cdot,\cdot]_{\cH^2}$.

The Krein space adjoint $\Gamma^{[*]}$ of $\Gamma$ is defined by
\[
\Gamma^{[*]}:=\{(\whk,\whg)\in\cH^2\times\fH^2\vrt
(\forall(\whf,\whh)\in\Gamma)\,
[\whf,\whg]_{\fH^2}=[\whh,\whk]_{\cH^2}\}\,.
\]
In particular, the inclusion
$\Gamma^{-1}\subseteq\Gamma^{[*]}$ implies that
the Green identity holds:
\begin{equation}
[\whf,\whg]_{\fH^2}=[\whh,\whk]_{\cH^2}\,,\quad
(\whf,\whh)\in\Gamma\,,\quad
(\whg,\whk)\in\Gamma\,.
\label{eq:Green}
\end{equation}

Put $A_*:=\dom\Gamma$ and
\begin{equation}
\Gamma_z:=\Gamma\vrt_{\whfN_z(A_*)}:=
\Gamma\mcap(\whfN_z(A_*)\times\cH^2)\,,\quad
z\in\bbC
\label{eq:Gzz}
\end{equation}
and let $\Gamma^{[*]}_z$ denote the Krein space
adjoint of $\Gamma_z$.
As usual, the eigenspaces of $A_*$ are given by
\[
	\fN_z(A_*):=\ker(A_*-z)\,,\quad
	\whfN_z(A_*):=\{\whf_z=(f_z,zf_z)\vrt
	f_z\in\fN_z(A_*)\}
\]
and similarly for other linear relations.

If $\Gamma$ is isometric, then
$\Gamma_z\subseteq\Gamma$ implies that
\begin{equation}
\Gamma^{-1}_w\subseteq\Gamma^{-1}\subseteq
\Gamma^{[*]}\subseteq\Gamma^{[*]}_z
\label{eq:xx}
\end{equation}
for $w\in\bbC$; that is, $\Gamma_z$ is also isometric.

The $J_\fH$-orthogonal complement
(recall \eg \cite[Definition~1.11]{Azizov89})
$\whfN_z(A_*)^{[\bot]}$ of $\whfN_z(A_*)=\dom\Gamma_z$
is written in terms of $\Gamma_z$ thus
\[
\whfN_z(A_*)^{[\bot]}=(\dom\Gamma_z)^{[\bot]}
=\mul\Gamma^{[*]}_z\,.
\]
On the other hand, $\whfN_z(A_*)^{[\bot]}$
consists of $(f,f^\prime)\in\fH^2$ such that
$f^\prime-\ol{z}f\in\fN_z(A_*)^{\bot}$
($\bot$ denotes the orthogonal complement in $\fH$); hence
\begin{equation}
\whfN_z(A_*)^{[\bot]}=\ol{z}I_\fH\hsum
(\{0\}\times\fN_z(A_*)^{\bot})
\label{eq:eigendecz-b}
\end{equation}
where $\hsum$ denotes the componentwise sum
\cite[Section~2.4]{Hassi09} and
$I_{\fH}$ (the graph of) the
identity operator in $\fH$.
Because $A_*$ is dense in $A^*=\ol{\dom}\Gamma$, one has that
(see also \cite[Proposition~3.9(i)]{Derkach17})
\begin{equation}
\fN_z(A_*)^{\bot}=\fN_z(A^*)^{\bot}=\fM_{\ol{z}}:=
\ol{\ran}(A-\ol{z})\,.
\label{eq:AAA}
\end{equation}
\begin{lem}\label{lem:main}
Let $\Gamma$ be an isometric linear relation
from a $J_\fH$-space to a $J_\cH$-space,
and define
\[
A:=\mul\Gamma^{[*]}\,,\quad
A_\star:=\dom\ol{\Gamma}\,,\quad
A_\#:=\dom\Gamma_\#\,,\quad
\Gamma_\#:=(\Gamma^{[*]})^{-1}\,,
\]
\[
M_{\Gamma_\#}(z):=\Gamma_\#(\whfN_z(A_\#))\,,
\quad z\in\bbC\,.
\]
The following statements are equivalent:
\begin{itemize}
\item[$\mrm{(i)}$]
$\ker\Gamma^{[*]}_z=M_{\Gamma_\#}(\ol{z})$.
\item[$\mrm{(ii)}$]
$M_{\Gamma_\#}(\ol{z})$ is a closed linear relation
in $\cH$.
\item[$\mrm{(iii)}$]
$A^*=A_\star\hsum\whfN_z(A^*)$.
\end{itemize}
\end{lem}
\begin{proof}
(i) $\Rightarrow$ (ii) is clear, since
the kernel of a closed linear relation
$\Gamma^{[*]}_z$ is closed.

(ii) $\Rightarrow$ (i)
By using \eqref{eq:Gzz} we have
\[
\Gamma^{[*]}_z=
\bigl(\Gamma\mcap(\whfN_z(A_*)\times\cH^2)\bigr)^{[*]}
=\ol{\Gamma^{[*]}\hsum\fL_z}\,,\quad
\fL_z:=\{0\}\times \whfN_z(A_*)^{[\bot]}\,.
\]
Then
\[
\ker\Gamma^{[*]}_z=\ker\ol{\Gamma^{[*]}\hsum\fL_z}
=\ol{\ker}(\Gamma^{[*]}\hsum\fL_z)\,.
\]
From here we see that
\[
\ker\Gamma^{[*]}_z=\ker(\Gamma^{[*]}\hsum\fL_z)
\quad\Leftrightarrow\quad
\ker(\Gamma^{[*]}\hsum\fL_z)=
\ol{\ker}(\Gamma^{[*]}\hsum\fL_z)\,.
\]
By using \eqref{eq:eigendecz-b} and \eqref{eq:AAA}
\[
\ker(\Gamma^{[*]}\hsum\fL_z)=\Gamma_\#
\bigl(\whfN_{\ol{z}}(A_\#)
\hsum\Delta_{\ol{z}}
\bigr)\,,\quad
\whfN_{\ol{z}}(A_\#)=A_\#\mcap
\ol{z}I_\fH\,,
\]
\[
\Delta_{\ol{z}}:=A_\#\mcap
(\{0\}\times\fM_{\ol{z}})
=\{0\}\times(\mul A_\#\mcap\fM_{\ol{z}})\,.
\]
But $A\subseteq A_\#$ implies that
\[
\mul A_\#\mcap\fM_{\ol{z}}=\mul A\quad\text{so that}\quad
\Delta_{\ol{z}}=\{0\}\times\mul A
\subseteq\ker\Gamma_\#=A
\]
and it therefore follows that
\[
\ker(\Gamma^{[*]}\hsum\fL_z)=
\Gamma_\#(\whfN_{\ol{z}}(A_\#))=:M_{\Gamma_\#}(\ol{z})\,.
\]

(i) $\Leftrightarrow$ (iii)
By arguing as in \cite[Lemma~2.10]{Hassi09},
the componentwise sum of two closed linear relations
($\Gamma^{[*]}$ and $\fL_z$)
is closed iff the componentwise sum of their adjoints
(and hence of their Krein space adjoints) is a closed
linear relation, \ie
\[
\ol{\Gamma^{[*]}\hsum\fL_z}=
\Gamma^{[*]}\hsum\fL_z
\]
iff
\[
\ol{\Gamma}\hsum\bigl(\whfN_z(A^*)\times\cH^2\bigr)=
\bigl(A_\star\hsum\whfN_z(A^*) \bigr)\times\cH^2
\]
is closed, where we also use \eqref{eq:AAA}.
Because $\ol{A_\star}=A^*$ and
\[
\ol{A_\star\hsum\whfN_z(A^*)}=
\ol{A^*\hsum\whfN_z(A^*)}=A^*
\]
we get that $\Gamma^{[*]}\hsum\fL_z$ is closed iff
\[
A_\star\hsum\whfN_z(A^*)=
\ol{A_\star\hsum\whfN_z(A^*)}=A^*\,.
\]
By the proof of (i) it therefore follows that
\[
\ker\Gamma^{[*]}_z=M_{\Gamma_\#}(\ol{z})
\quad\Leftrightarrow\quad
A^*=A_\star\hsum\whfN_z(A^*)\,.\qedhere
\]
\end{proof}
%%%%%%%%%%%%%%%%%%%%%%%%%%%%%%%%%%%%%%%%%%%%%%%%%%%%%%%%%%%%%%
%%%%%%%%%%%%%%%%%%%%%%%%%%%%%%%%%%%%%%%%%%%%%%%%%%%%%%%%%%%%%%
%%%%%%%%%%%%%%%%%%%%%%%%%%%%%%%%%%%%%%%%%%%%%%%%%%%%%%%%%%%%%%
%%%%%%%%%%%%%%%%%%%%%%%%%%%%%%%%%%%%%%%%%%%%%%%%%%%%%%%%%%%%%%
\section{Weyl family corresponding to an essentially
unitary boundary pair}\label{sec:Weyl}
%%%%%%%%%%%%%%%%%%%%%%%%%%%%%%%%%%%%%%%%%%%%%%%%%%%%%%%%%%%%%%
%%%%%%%%%%%%%%%%%%%%%%%%%%%%%%%%%%%%%%%%%%%%%%%%%%%%%%%%%%%%%%
%%%%%%%%%%%%%%%%%%%%%%%%%%%%%%%%%%%%%%%%%%%%%%%%%%%%%%%%%%%%%%
%%%%%%%%%%%%%%%%%%%%%%%%%%%%%%%%%%%%%%%%%%%%%%%%%%%%%%%%%%%%%%
Here and elsewhere below, a linear relation
$\Gamma$ from a $J_\fH$-space
to a $J_\cH$-space is assumed to be isometric,
unless explicitly stated otherwise. Whenever
we speak of an isometric boundary pair $(\cH,\Gamma)$ for $A^*$,
we assume that $A$ is a closed symmetric linear relation
in $\fH$.

The Weyl family of $A$ corresponding to an isometric
boundary pair $(\cH,\Gamma)$ for $A^*$ is defined by
(see \eg \cite[Definition~3.2]{Derkach17})
\[
M_\Gamma(z):=\Gamma(\whfN_z(A_*))\,,\quad
z\in\bbC_*\,.
\]
In terms of $\Gamma_z$
the linear relation $M_\Gamma(z)$ and its adjoint
in $\cH$ can be described by
\begin{equation}
M_\Gamma(z)=\ran\Gamma_z\,,\quad
M_\Gamma(z)^*=(\ran\Gamma_z)^{[\bot]}=\ker\Gamma^{[*]}_z
\label{eq:MM*}
\end{equation}
It follows from the Green identity \eqref{eq:Green}
and \eqref{eq:MM*} that
$M_\Gamma(\ol{z})\subseteq M_\Gamma(z)^*$.
By \eqref{eq:MM*}, the intersection
$M_\Gamma(z)\mcap M_\Gamma(z)^*$
is a subset of the set of neutral vectors
\cite[Definition~1.3]{Azizov89} of a $J_\cH$-space.
Thus, by applying \eqref{eq:Green}
to $(\whf_z,\whh)\in\Gamma_z$ such that
$[\whh,\whh]_{\cH^2}=0$, one finds that
$M_\Gamma(z)\mcap M_\Gamma(z)^*=\mul\Gamma_z=\mul\Gamma$.
This result is stated without proof in
\cite[Lemma~3.6(i)]{Derkach17}, \cite[Lemma~7.52(i)]{Derkach12},
and is shown in \cite[Lemma~4.1(i)]{Derkach06}
for a unitary boundary pair $(\cH,\Gamma)$. By using
Lemma~\ref{lem:main}, one can find other
invariance results for $M_\Gamma$, which we
do not repeat here.

The following corollary is an application
of Lemma~\ref{lem:main} and \eqref{eq:MM*}.
\begin{cor}\label{cor:0}
Let $(\cH,\Gamma)$ be an essentially unitary boundary pair
for $A^*$, with the Weyl family $M_\Gamma$, and let
$M_{\ol{\Gamma}}$ be the Weyl family corresponding
to a unitary boundary pair $(\cH,\ol{\Gamma})$.
Then $M_\Gamma(z)^*=M_{\ol{\Gamma}}(\ol{z})$,
$z\in\bbC_*$.
\end{cor}
\begin{proof}
Put $\ol{\Gamma}=\Gamma_\#$ in the lemma;
then $A_\star=A_\#$. Since
$M_{\Gamma_\#}(\ol{z})=M_{\ol{\Gamma}}(\ol{z})$
and $M_{\ol{\Gamma}}(\ol{z})$ is closed
by \cite[Theorem~3.9]{Derkach06}, it follows that
$M_\Gamma(z)^*=M_{\ol{\Gamma}}(\ol{z})$.
\end{proof}
Clearly if $\Gamma$ is unitary,
we get that $M_\Gamma(z)^*=M_\Gamma(\ol{z})$
for $z\in\bbC_*$.
\begin{rem}
In the proof of Corollary~\ref{cor:0} we use the
fact that the Weyl family $M_{\ol{\Gamma}}$
corresponding to a unitary boundary pair
$(\cH,\ol{\Gamma})$ for $A^*$ is a Nevanlinna
family, and hence is in particular defined by a closed
linear relation $M_{\ol{\Gamma}}(z)$, $z\in\bbC_*$.
If we did not rely on the present fact, we would instead
obtain from (iii) in Lemma~\ref{lem:main} a weaker
variant of Corollary~\ref{cor:0}:
$M_\Gamma(z)^*=M_{\ol{\Gamma}}(\ol{z})$ for an essentially
unitary boundary pair $(\cH,\Gamma)$ such that
$(\cH,\ol{\Gamma})$ is an $S$-generalized
boundary pair $(\cH,\ol{\Gamma})$, \ie such that
$A_0:=\ker(\ol{\Gamma})_0$ is additionally assumed to be
self-adjoint (\cite[Definition~5.11]{Derkach17}).
Here the linear relation
$(\ol{\Gamma})_0:=\{(\whf,h)\vrt(\exists h^\prime)\,
(\whf,(h,h^\prime))\in\ol{\Gamma} \}$. The argumentation
is due to von Neumann formula
$A^*=A_0\hsum\whfN_z(A^*)$, $z\in\bbC_*$, from which
one has
$A_\star=A_0\hsum\whfN_z(A_\star)$, since
$A_0\subseteq A_\star$; for the latter representation
of $A_\star$, statement (iii) is true
(see also \cite[Theorem~5.17]{Derkach17}).
Vice verse, because by Corollary~\ref{cor:0}
$M_{\Gamma_\#}(\ol{z})=M_{\ol{\Gamma}}(\ol{z})$
is closed, we conclude from Lemma~\ref{lem:main}
that (iii) must be true for $\Gamma$ essentially unitary.
\end{rem}
\begin{cor}
Let $(\cH,\Gamma)$ be an essentially
unitary boundary pair for $A^*$, and let
$A_\star:=\dom\ol{\Gamma}$. Then
$A^*=A_\star\hsum\whfN_z(A^*)$ for all $z\in\bbC_*$.
\end{cor}
%%%%%%%%%%%%%%%%%%%%%%%%%%%%%%%%%%%%%%%%%%%%%%%%%%%%%%%%%%%%%%
%%%%%%%%%%%%%%%%%%%%%%%%%%%%%%%%%%%%%%%%%%%%%%%%%%%%%%%%%%%%%%
%%%%%%%%%%%%%%%%%%%%%%%%%%%%%%%%%%%%%%%%%%%%%%%%%%%%%%%%%%%%%%
%%%%%%%%%%%%%%%%%%%%%%%%%%%%%%%%%%%%%%%%%%%%%%%%%%%%%%%%%%%%%%
\section{Nevanlinna families}\label{sec:Nevaln}
%%%%%%%%%%%%%%%%%%%%%%%%%%%%%%%%%%%%%%%%%%%%%%%%%%%%%%%%%%%%%%
%%%%%%%%%%%%%%%%%%%%%%%%%%%%%%%%%%%%%%%%%%%%%%%%%%%%%%%%%%%%%%
%%%%%%%%%%%%%%%%%%%%%%%%%%%%%%%%%%%%%%%%%%%%%%%%%%%%%%%%%%%%%%
%%%%%%%%%%%%%%%%%%%%%%%%%%%%%%%%%%%%%%%%%%%%%%%%%%%%%%%%%%%%%%
The following definition of a Nevanlinna family is due to
\cite[Definition~9.12]{Derkach12},
\cite[Definition~2.1]{Behrndt08},
\cite[Section~2.6]{Derkach06}.
\begin{defn}\label{defn:Nevaln}
A family $M(z)$, $z\in\bbC_*$, of linear relations
in $\cH$ belongs to the class $\wtcR(\cH)$ of
Nevanlinna families, or is said to be a Nevanlinna family, if:
\begin{itemize}
\item[$(a)$]
For $\Im z>0$ ($\Im z<0$), the relation $M(z)$ is maximal
dissipative (accumulative),
and the operator family $(M(z)+w)^{-1}\in\mrm{B}(\cH)$,
$w\in\bbC_+$ ($\bbC_-$), is analytic;
\item[$(b)$]
$M(z)^*=M(\ol{z})$.
\end{itemize}
\end{defn}
Moreover:
$M\in\cR[\cH]$ if $M\in\wtcR(\cH)$ and
$\dom M(z)=\cH$;
$M\in\cR^s[\cH]$ if $M\in\cR[\cH]$ and $\ker\Im M(z)=\{0\}$;
$M\in\cR^u[\cH]$ if $M\in\cR^s[\cH]$ and $0\in\res\Im M(z)$
($\res$ labels the resolvent set). These subclasses
are all covered by the subclass $\cR(\cH)\subset\wtcR(\cH)$
of Nevanlinna families $M$ such that $M(z)$ is an operator.
For more on the
classification of Nevanlinna families the reader
may refer to \cite{Derkach06}.

Recall that $\bbC_+$ ($\bbC_-$) is the set of $z\in\bbC$
such that $\Im z>0$ ($\Im z<0$). A linear relation $M(z)$
is dissipative (resp. accumulative)
if $(\forall(h,h^\prime)\in M(z))$
$\Im\braket{h,h^\prime}_\cH\geq0$ (resp. $\leq0$);
we emphasize that
the $\cH$-scalar product is conjugate-linear in the first
argument.
A dissipative (resp. accumulative)
$M(z)$ is maximal dissipative
(resp. maximal accumulative) if $M(z)$ has no proper
dissipative (resp. accumulative) extensions.

The Weyl family $M_\Gamma(z)$, $z\in\bbC_*$,
corresponding to an isometric boundary pair $(\cH,\Gamma)$
for $A^*\supseteq A=\ol{A}$
is dissipative (accumulative) for
$\Im z>0$ ($\Im z<0$). Indeed, in view of \eqref{eq:MM*},
$\whh=(h,h^\prime)\in M_\Gamma(z)$ implies that
$(\whf_z,\whh)\in\Gamma_z$ for some $\whf_z\in\whfN_z(A_*)$.
Then, by the Green identity \eqref{eq:Green},
$\Im\braket{h,h^\prime}_\cH=(\Im z)\norm{f_z}^2_\fH$; hence
the claim. But then $(M_\Gamma(z)+w)^{-1}$
is an operator family by
\cite[Theorem~3.1(i)]{Dijksma74}.

If in addition $M_\Gamma(z)^*=M_\Gamma(\ol{z})$, then
$M_\Gamma(\ol{z})^*=M_\Gamma(z)$, and therefore
each member of the Weyl family is closed in this case:
$M_\Gamma(z)^{**}=M_\Gamma(\ol{z})^*=M_\Gamma(z)$.
But then the operator $(M_\Gamma(z)+w)^{-1}\in\mrm{B}(\cH)$
by \cite[Theorem~3.1(vi)]{Dijksma74}, and the relation
$M_\Gamma(z)$ is maximal dissipative (accumulative)
by \cite[Theorem~3.4(ii)]{Dijksma74}.

By the above we conclude the following:
\begin{lem}\label{prop:3}
Let $M_\Gamma$ be the Weyl family corresponding to
an isometric boundary pair $(\cH,\Gamma)$ for $A^*$.
If $M_\Gamma(z)^*=M_\Gamma(\ol{z})$, $z\in\bbC_*$,
then $M_\Gamma$ is a Nevanlinna family .\qed
\end{lem}
By applying Corollary~\ref{cor:0} and Lemma~\ref{prop:3}
we deduce Theorem~\ref{thm:main}.
\begin{rem}
Recall that the Weyl family of $A$ and of its
simple part coincide. Indeed,
let $A_s$ be the simple part
\cite[Proposition~1.1]{Langer77} of $A$ and let
$\Gamma_s$ be the restriction to $\fH^2_s$ of $\Gamma$,
where $\fH_s$ is the closed linear span of
$\{\fN_z(A^*)\vrt z\in\bbC_*\}$.
Put $A_{s*}:=\dom\Gamma_s=A_*\mcap\fH^2_s$.
Then $\whfN_z(A_{s*})=\whfN_z(A_*)\mcap\fH^2_s=\whfN_z(A_*)$.
Thus, since $\Gamma$ is isometric,
$\Gamma_s\subseteq\Gamma$ is also isometric,
and the corresponding Weyl family
of $A_s$ is given by $M_{\Gamma_s}(z)=M_\Gamma(z)$, $z\in\bbC_*$,
by noting that $\Gamma_s\mcap\Gamma_z=\Gamma_z$.
In addition, given an isometric $\Gamma$,
assume that $\Gamma_s$ is essentially unitary.
Then $\Gamma$ is also essentially unitary,
whose closure $\ol{\Gamma}=\ol{\Gamma_s}$.
\end{rem}
%%%%%%%%%%%%%%%%%%%%%%%%%%%%%%%%%%%%%%%%%%%%%%%%%%%%%%%%%%%%%%
%%%%%%%%%%%%%%%%%%%%%%%%%%%%%%%%%%%%%%%%%%%%%%%%%%%%%%%%%%%%%%
%%%%%%%%%%%%%%%%%%%%%%%%%%%%%%%%%%%%%%%%%%%%%%%%%%%%%%%%%%%%%%
%%%%%%%%%%%%%%%%%%%%%%%%%%%%%%%%%%%%%%%%%%%%%%%%%%%%%%%%%%%%%%
\section{Example of an essentially unitary
boundary triple}\label{sec:exam}
%%%%%%%%%%%%%%%%%%%%%%%%%%%%%%%%%%%%%%%%%%%%%%%%%%%%%%%%%%%%%%
%%%%%%%%%%%%%%%%%%%%%%%%%%%%%%%%%%%%%%%%%%%%%%%%%%%%%%%%%%%%%%
%%%%%%%%%%%%%%%%%%%%%%%%%%%%%%%%%%%%%%%%%%%%%%%%%%%%%%%%%%%%%%
%%%%%%%%%%%%%%%%%%%%%%%%%%%%%%%%%%%%%%%%%%%%%%%%%%%%%%%%%%%%%%
An isometric boundary
pair $(\cH,\Gamma)$ for $A^*\supseteq A=\ol{A}$
having the property $\mul\Gamma=\{0\}$
is named by an isometric boundary triple
$(\cH,\Gamma_0,\Gamma_1)$
(\cite[Section~3.1]{Derkach17}). Then
a linear relation $\Gamma$ is identified
with an operator $\whf\mapsto(\Gamma_0\whf,\Gamma_1\whf)$
from $A_*:=\dom\Gamma$ to $\cH^2$, and the corresponding
Weyl function $M_\Gamma$ satisfies
$M_\Gamma(z)\Gamma_0=\Gamma_1$ on $\whfN_z(A_*)$,
$z\in\bbC_*$. When $\Gamma$ is in addition essentially unitary,
the triple $(\cH,\Gamma_0,\Gamma_1)$ is called
an essentially unitary boundary triple.
We recall from \cite[Lemma~4.1(ii)]{Derkach06}
that $\mul\Gamma=\{0\}$ implies $\mul M_\Gamma(z)=\{0\}$,
\ie $M_\Gamma\in\cR(\cH)$, but note that
$\dom M_\Gamma(z)=\Gamma_0(\whfN_z(A_*))$
is in general a proper subset of $\cH$.
In case the triple $(\cH,\Gamma_0,\Gamma_1)$ is such that
$A_*=A^*$ and $\Gamma$ is unitary, one has
$\ran\Gamma=\cH^2$ by \cite[Corollary~2.4]{Derkach06},
and the triple is called an ordinary boundary triple
for $A^*$. In case $A$ is densely defined (\ie $A^*$
is an operator), $A^*$ is identified with its graph.

For the illustration of Corollary~\ref{cor:main},
we consider an example of an essentially unitary boundary triple
(\cf \cite{Jursenas18}).
Let $A$ be a densely defined, closed, symmetric operator
in a Hilbert space $\fH$ with defect numbers $(d,d)$.
Let $L$ be a self-adjoint extension
of $A$ in $\fH$. For simplicity, we assume that $L$
is lower semibounded.
By the von Neumann formula
the adjoint $A^*\supseteq L$ is described by
$\dom A^*=\dom L\dsum\fN_z(A^*)$, where the eigenspace
is spanned by the deficiency elements
$g_\sigma(z)$, $z\in\res L$, with $\sigma$ ranging
over an index set $\cS$ of cardinality $d$. That is,
$\fN_z(A^*)=g_z(\bbC^d)$, where
$g_z(c):=\sum_{\sigma\in\cS}c_\sigma g_\sigma(z)$
for $c=(c_\sigma)\in\bbC^d$.

Let $\fH_{n+1}\subset\fH_n$, $n\in\bbZ$,
be the scale of Hilbert spaces associated with $L$
(see \eg \cite{Albeverio00});
in particular, $\fH_2=\dom L$ and
$\fH_0=\fH$. Then a deficiency element
$g_\sigma(z)\in\fH_0\setm\fH_1$ can be defined
in the generalized sense as $g_\sigma(z)=(L-z)^{-1}\vp_\sigma$
for the functional $\vp_\sigma\in\fH_{-2}\setm\fH_{-1}$.
Thus, $A$ is the symmetric restriction of $L$ to the
domain of $u\in\fH_2$ such that $\braket{\vp,u}=0$;
we use the vector notation
$\braket{\vp,\cdot}=(\braket{\vp_\sigma,\cdot})\co
\fH_2\lto\bbC^d$.
\subsection{Finite rank perturbation of \texorpdfstring{$L$}{}}
Fix $m\in\bbN$ and consider the set
\[
\fK:=\spn\{g_\alpha:=g_\sigma(z_j)\vrt
\alpha=(\sigma,j)\in\cS\times J\}\,,\quad
J:=\{1,2,\ldots,m\}\,.
\]
The points $z_j\in\res L\mcap\bbR$ are such that
$z_j\neq z_{j^\prime}$
for $j\neq j^\prime$ and $j,j^\prime\in J$.
The system $\{g_\alpha\}$
is linearly independent, so the Hermitian (Gram) matrix
\[
	\cG:=(\braket{g_\alpha,g_{\alpha^\prime}}_\fH)
	\in\mrm{B}(\bbC^{md})
\]
is positive definite.

Consider another set
\[
	\fK^\prime:=\fH_{2m+2}\dsum\fM_z\,,\quad
	z\in\res L\setm\cZ\,,\quad\cZ:=\{z_j\vrt j\in J\}
\]
where the subset $\fM_z\subseteq\fH_{2m}\subseteq\fH_2$
is defined by
\[
	\fM_z:=\spn\{\sum_{j\in J}b_j(z_j)^{-1}
	(L-z_j)^{-1}g_\sigma(z)\vrt\sigma\in\cS\}
\]
where the multiplier
\[
	b_j(z_j):=\prod_{j^\prime\in J\setm\{j\}}(z_j-z_{j^\prime})
\]
for $m>1$, and $b_1(z):=1$ for $m=1$ and $z\in\bbC$.
Define the operator $K$ in $\fH$ by
\[
\dom K:=\fK^\prime\dsum\fK\,,\quad
K(u+k):=Lu+\sum_{\alpha,\alpha^\prime\in\cS\times J}
C_{\alpha\alpha^\prime}
\braket{g_{\alpha^\prime},k}_\fH g_\alpha
\]
for $u+k\in\fK^\prime\dsum\fK$,
where $C_{\sigma j,\sigma^\prime j^\prime}:=
z_j[\cG^{-1}]_{\sigma j,\sigma^\prime j^\prime}$.
Then $K$ is referred to as the rank-$md$ perturbation of $L$.
Note that the sum defining $\dom K$ is direct.
\subsection{Boundary value space}
Let $\cH:=\bbC^d$ and
define the operator $\Gamma$ (identified with its graph) by
\[
\Gamma:=\{\bigl(
(u+k,K(u+k)),(c(k),\braket{\vp,u}+\cM d(k))\bigr)\vrt
u+k\in\fK^\prime\dsum\fK\}\,.
\]
Here the column-vectors
\begin{align*}
c(k)=&(c_\sigma(k))\in\bbC^d\,,\quad
c_\sigma(k):=\sum_{j\in J}d_{\sigma j}(k)\,,
\\
d(k)=&(d_\alpha(k))\in\bbC^{md}\,,\quad
d_\alpha(k):=\sum_{\alpha^\prime\in\cS\times J}
[\cG^{-1}]_{\alpha\alpha^\prime}
\braket{g_{\alpha^\prime},k}_\fH
\end{align*}
and the matrix
\[
	\cM=(\cM_{\sigma \alpha^\prime})
	\in\mrm{B}(\bbC^{md},\bbC^d)\,,\quad
	\cM_{\sigma,\sigma^\prime j^\prime}:=
	R_{\sigma\sigma^\prime}(z_{j^\prime})
\]
for some matrix-valued Nevanlinna family
$R(\cdot)=(R_{\sigma\sigma^\prime}(\cdot))$
of class $\cR^s[\cH]$. In fact, if the functional
$\braket{\vp^{\mrm{ex}},\cdot}=
(\braket{\vp^{\mrm{ex}}_\sigma,\cdot})$ extends
$\braket{\vp,\cdot}\co\fH_2\lto\cH$ to $\dom A^*$
according to (see also \cite[Section~3.1.3]{Albeverio00},
\cite{Hassi09-b})
\[
	\braket{\vp^{\mrm{ex}},u}:=\braket{\vp,u^\#}+R(z)c\,,
	\quad u=u^\#+g_z(c)\,,\quad u^\#\in\fH_2\,,
	\quad c\in\cH
\]
then the matrix $R(z)\in\mrm{B}(\cH)$,
$z\in\res L$, is defined by
\[
R_{\sigma\sigma^\prime}(z):=
\braket{\vp^{\mrm{ex}}_\sigma,g_{\sigma^\prime}(z)}\,.
\]
From here one verifies that
$\ker\Im R(z)=\{0\}$:
By definition
the imaginary part $\Im R(z)$ is the matrix with entries
$(\Im z)\braket{g_\sigma(z),g_{\sigma^\prime}(z)}_\fH$.
Then the kernel $\ker\Im R(z)$ is the set of $c\in\cH$ such that
$g_z(c)\in g_z(\cH)^\bot$; hence $c=0$.

An ordinary boundary triple
$(\bbC^d,\wtGm_0,\wtGm_1)$ for $A^*$
is defined by
\[
	\wtGm_0(u^\#+g_z(c)):=c\,,\quad
	\wtGm_1(u^\#+g_z(c)):=
	\braket{\vp,u^\#}+R(z)c
\]
with $u^\#\in\fH_2$ and $c\in\cH$, and with
$R(z)$ as described above, while
$\Gamma$ defines the boundary value space of operator $K$. Indeed,
associate with $\Gamma$ two single-valued linear relations
\begin{align*}
\Gamma_0:=&\{(\whf,h)\vrt(\exists h^\prime\in\cH)\,
(\whf,\whh)\in\Gamma\,;\whh=(h,h^\prime)\}\,,
\\
\Gamma_1:=&\{(\whf,h^\prime)\vrt(\exists h\in\cH)\,
(\whf,\whh)\in\Gamma\,;\whh=(h,h^\prime)\}\,.
\end{align*}
Then the boundary form of the operator $K$
is given by
\[
	\braket{u,Kv}_\fH-\braket{Ku,v}_\fH=
	\braket{\Gamma_0u,\Gamma_1v}_{\cH}-
	\braket{\Gamma_1u,\Gamma_0v}_{\cH}
\]
for $u,v\in\dom K$, provided that $\Gamma_0$ (resp. $\Gamma_1$)
is regarded as the mapping $\dom K\lto\cH$.
\subsection{Linear relation \texorpdfstring{$\Gamma_z$}{} and
its Krein space adjoint}
Let $K^*$ be the adjoint of $K$ in $\fH$.
\begin{prop}\label{prop:KA}
$K^*=A$.
\end{prop}
\begin{proof}
The (graph of the) adjoint $K^*$ consists of
$(y,x)\in\fH\times\fH$ such that
$(\forall u+k\in\fK^\prime\dsum\fK)$
\[
	\braket{u+k,x}_\fH=\braket{K(u+k),y}_\fH\,.
\]
Let $P$ be an orthogonal projection in $\fH$ onto $\fK$.
Then $x=x_\bot+k_x\in\fK^\bot\op\fK$ and
$y=y_\bot+k_y\in\fK^\bot\op\fK$, where
$\fK^\bot:=\fH\om\fK$. Since $\fK^\prime\subseteq\fK^\bot$
it follows that
\[
	\braket{u+k,x}_\fH=\braket{u,x}_\fH+
	\braket{d(k),\cG d(k_x)}_{\bbC^{md}}
\]
and
\[
	\braket{K(u+k),y}_\fH=
	\braket{Lu,y}_\fH+
	\braket{d(k),Z_d\cG d(k_y)}_{\bbC^{md}}
\]
where $Z_d$ is the matrix direct sum of $d$ diagonal
matrices $\diag\{z_j;j\in J\}$. Because the set
$\fK^\prime\supseteq\fH_{2m+2}$, \ie is a dense
subset of $\fH_2$, it follows from the above that
$y\in\fH_2$ and $x=Ly$ and $d(k_x)=\cG^{-1}Z_d\cG d(k_y)$.
On the other hand, $k_x=PLy$ implies that
\[
d(k_x)=\cX\braket{\vp,y}+\cG^{-1}Z_d\cG d(k_y)\,,
\quad
\cX=(\cX_{\alpha\sigma^\prime})
\in\mrm{B}(\bbC^d,\bbC^{md})\,,\quad
\cX_{\alpha\sigma^\prime}:=\sum_{j^\prime\in J}
[\cG^{-1}]_{\alpha,\sigma^\prime j^\prime}\,.
\]
Since $\cX$ is invertible, one deduces that
$\braket{\vp,y}=0$, and hence $y\in\dom A$.
\end{proof}
Let $A_*:=\dom\Gamma=K$. By Proposition~\ref{prop:KA}
$A_*$ is dense in $A^*$, \ie $\ol{A_*}=A^*$.
One also verifies that the eigenspace
$\fN_z(A_*)=\fN_z(A^*)$ for $z\in\res L\setm\cZ$.
Then, the single-valued linear relation $\Gamma_z$
(recall \eqref{eq:Gzz})
and its Krein space adjoint $\Gamma^{[*]}_z$ are given by
\begin{align*}
\Gamma_z=&\{\bigl((g_z(c),zg_z(c)),(c,R(z)c)\bigr)\vrt
c\in\cH\}\,,
\\
\Gamma^{[*]}_z=&\{\bigl((c,\braket{\vp,u}+R(\ol{z})c),
(g,Lu+\ol{z}(g-u))\bigr)
\vrt c\in\cH\,;\,u\in\fH_2\,;\,g\in\fH\}\,.
\end{align*}
It follows that $\Gamma_z\subseteq\Gamma$ and
$\Gamma^{-1}_w\subseteq\Gamma^{[*]}_z$ for all
$z,w\in\res L\setm\cZ$. Moreover
\[
\ker\Gamma^{[*]}_z=R(\ol{z})\,,\quad
z\in\res L\setm\cZ\,.
\]
\subsection{Weyl function}
Let $\Gamma^{[*]}$ be the Krein space adjoint of $\Gamma$.
\begin{prop}
$\Gamma^{[*]}=\ol{\Gamma}^{\;-1}$, with the closure
$\ol{\Gamma}=(\wtGm_0,\wtGm_1)$.
\end{prop}
\begin{proof}
Step 1.
By definition,
$\Gamma^{[*]}$ consists of
$((c,c^\prime),(u,u^\prime))\in\cH^2\times\fH^2$ such that
\[
u^\prime=L_{-2}u-\omega c\,,\quad
\omega c:=\sum_{\sigma\in\cS}c_\sigma\vp_\sigma\,,\quad
c=(c_\sigma)\in\cH
\]
and
\[
d(k_{u^\prime})=\cG^{-1}[Z_d\cG d(k_u)-\cM^*c]
+\cX c^\prime\,,\quad
k_u:=Pu\in\fK\,,\quad
k_{u^\prime}:=Pu^\prime\in\fK
\]
where $L_{-2}$ is a bounded continuation to $\fH_{-2}$ of $L$,
and where an orthogonal projection $P$ and the matrices
$Z_d$ and $\cX$ are as in the proof of
Proposition~\ref{prop:KA}. Using
$u=u_\bot+k_u\in\fK^\bot\op\fK$ and
\[
L_{-2}u=L_{-2}u_\bot+\sum_{\alpha\in\cS\times J}
[Z_dd(k_u)]_\alpha g_\alpha+\omega c(k_u)
\]
one concludes that $u\in\dom\tau$ and
$u^\prime=\tau u$ and $c=c(k_u)=:\Gamma^\tau_0 u$,
where the operator
\[
\tau:=\{(u,Lu_\bot+\sum_{\alpha\in\cS\times J}
[Z_dd(k_u)]_\alpha g_\alpha)\vrt u_\bot\in\fH_2
\mcap\fK^\bot\}\,.
\]
Using in addition that
\[
\cG^{-1}[Z_d\cG d(k_u)-\cM^*c(k_u)]=
(Z_d-\cX\cM)d(k_u)
\]
one finds that
\[
d(P\tau u)=Z_dd(k_u)+\cX(c^\prime-\cM d(k_u))\,.
\]
On the other hand, by the definition of $\tau$
\[
d(P\tau u)=Z_dd(k_u)+\cX
\braket{\vp,u_\bot}\,.
\]
Thus
\[
c^\prime=\braket{\vp,u_\bot}+\cM d(k_u)=:
\Gamma^\tau_1u
\]
and subsequently
\[
\Gamma^{[*]}=\{((\Gamma^\tau_0 u,\Gamma^\tau_1 u),
(u,\tau u))\vrt u\in\dom\tau\}\,.
\]

Step 2.
The boundary form of $\tau$ satisfies
an abstract Green identity
\[
	\braket{u,\tau v}_\fH-\braket{\tau u,v}_\fH=
	\braket{\Gamma^\tau_0 u,\Gamma^\tau_1 v}_\cH-
	\braket{\Gamma^\tau_1 u,\Gamma^\tau_0 v}_\cH
\]
for $u,v\in\dom\tau$, and
for $\Gamma^\tau_0$ and $\Gamma^\tau_1$ as defined in step 1.
Therefore, to show that
\[
\Gamma^{[*]}=\{((\wtGm_0 u,\wtGm_1 u),
(u,A^* u))\vrt u\in\dom A^*\}
\]
it remains to prove that $\tau=A^*$ and
$\Gamma^\tau_i=\wtGm_i$, $i\in\{0,1\}$.

By direct computation, and using the equivalence
relation $\fH_2\mcap\fK\ni k\Leftrightarrow c(k)=0$,
one finds that the adjoint $\tau^*=A$; since $\tau$ and
$A$ are closed, this yields $\tau=A^*$.

Consider $u=u_\bot+k_u\in\dom\tau=\dom A^*$; then
$u=u^\#+g_z(c)$ with $c:=c(k_u)$ and
$u^\#:=u_\bot+k_u-g_z(c)$. Since $u_\bot\in\fH_2$
and $k_u-g_z(c)\in\fH_2$, it holds $u^\#\in\fH_2$. Then
\[
\Gamma^\tau_0u=c=\wtGm_0u
\]
and
\begin{align*}
\Gamma^\tau_1u=&\braket{\vp,u^\#}+
\braket{\vp,g_z(c)-k_u}+\cM d(k_u)
\\
=&\braket{\vp,u^\#}+R(z)c=\wtGm_1u\,.
\end{align*}
This completes the proof of
$(\Gamma^{[*]})^{-1}=(\wtGm_0,\wtGm_1)$.

Step 3.
The closure $\ol{\Gamma}$ is the adjoint of
$J_\fH\Gamma^{[*]}J_\cH$; hence it consists of
$((y,y^\prime),(x,x^\prime))\in\fH^2\times\cH^2$ such that
\[
\braket{u,y^\prime}_\fH-\braket{A^*u,y}_\fH=
\braket{\wtGm_0 u,x^\prime}_{\cH}-
\braket{\wtGm_1 u,x}_{\cH}
\]
for all $u\in\dom A^*$;
\ie $\ol{\Gamma}=(\Gamma^{[*]})^{-1}$.
\end{proof}
Observe that
$\Gamma^{-1}\subseteq\Gamma^{[*]}\subseteq\Gamma^{[*]}_w$
for all $w\in\res L\setm\cZ$, so \eqref{eq:xx} holds true.
Note also that $A=\ker\ol{\Gamma}=\ker\Gamma
\subseteq A_*=K\subseteq A^*=\dom\ol{\Gamma}$.

Since $\Gamma$ is essentially unitary and $\ol{A_*}=A^*$,
the triple $(\cH,\Gamma_0,\Gamma_1)$
is an essentially unitary boundary triple for $A^*$.
The triple $(\cH,\wtGm_0,\wtGm_1)$ is an ordinary boundary
triple for $A^*$, with the associated Weyl family defined by
$M_{\ol{\Gamma}}(z)=R(z)$, $z\in\res L$.

The domain
\[
\dom M_\Gamma(z)=\Gamma_0(\whfN_z(A_*))=\cH
\]
so by Corollary~\ref{cor:main} the Weyl function
\[
M_\Gamma(z)=M_{\ol{\Gamma}}(z)=R(z)\,,\quad
z\in\res L
\]
and therefore $M_\Gamma$
belongs to a Nevanlinna family of class $\cR^s[\cH]$.
That $M_\Gamma=R$ can be also checked
by computing $\Gamma(\whfN_z(A_*))$ directly.
Moreover, since $\ran\ol{\Gamma}=\cH^2$, one concludes
that actually $R\in\cR^u[\cH]$.
%%%%%%%%%%%%%%%%%%%%%%%%%%%%%%%%%%%%%%%%%%%%%%%%%%%%%%%%%%%%%%
%%%%%%%%%%%%%%%%%%%%%%%%%%%%%%%%%%%%%%%%%%%%%%%%%%%%%%%%%%%%%%
%%%%%%%%%%%%%%%%%%%%%%%%%%%%%%%%%%%%%%%%%%%%%%%%%%%%%%%%%%%%%%
%%%%%%%%%%%%%%%%%%%%%%%%%%%%%%%%%%%%%%%%%%%%%%%%%%%%%%%%%%%%%%
% \bibliographystyle{alpha}
% \bibliography{cita}

\newcommand{\etalchar}[1]{$^{#1}$}

\end{document}